\newcommand{\bea}{\begin{eqnarray}}
	\newcommand{\eea}{\end{eqnarray}}
\newcommand{\bna}{\begin{eqnarray*}}
	\newcommand{\ena}{\end{eqnarray*}}
\numberwithin{equation}{section}
\theoremstyle{plain}
\newtheorem*{Theorem A}{Theorem A}
\newtheorem*{Theorem B}{Theorem B}
\newtheorem{lemma}{Lemma}[section]
\newtheorem{theorem}{Theorem}[section]
\theoremstyle{definition}
\begin{document}
	
	\title
	[{Additive and subtractive bases of $ \mathbb{Z}_m$ in average}]
	{Additive and subtractive bases of $ \mathbb{Z}_m$ in average}
	
	\author
	[G. Liang \quad $\&$ \quad Y. Zhang \quad $\&$ \quad  H. Zuo] {Guangping Liang, \quad Yu Zhang \quad \text{and} \quad Haode Zuo$^\dagger$}
	
	\address{(Guangping Liang) School of Mathematical Science,  Yangzhou University, Yangzhou 225002, People's Republic of China}
	\email{\tt 15524259050@163.com}
	
	\address{(Yu Zhang) School of Mathematics,  Shandong University, Jinan 250100, People's Republic of China}
	\email{\tt yuzhang0615@mail.sdu.edu.cn}
	
	\address{(Haode Zuo) School of Mathematical Science,  Yangzhou University, Yangzhou 225002, People's Republic of China}
	\email{\tt yzzxzhd@yzu.edu.cn}

	\subjclass[2010]{Primary 11B13; Secondary 11B34}
	
	\keywords{representation functions, Ruzsa's numbers, prime number theorem, additive bases.}

\thanks{$^\dagger$ Corresponding author}	
	
	\begin{abstract}
		Given a positive integer $m$, let $\mathbb{Z}_m$ be the set of residue classes mod $m$. For $A\subseteq \mathbb{Z}_m$ and $n\in \mathbb{Z}_m$, let $\sigma_A(n)$ be the number of solutions to the equation $n=x+y$ with $x,y\in A$. Let $\mathcal{H}_m$ be the set of subsets $A\subseteq \mathbb{Z}_m$ such that $\sigma_A(n)\geq1$ for all $n\in \mathbb{Z}_m$. Let
		$$
		\ell_m=\min\limits_{A\in \mathcal{H}_m}\left\lbrace m^{-1}\sum_{n\in \mathbb{Z}_m}\sigma_A(n)\right\rbrace.
		$$
Following a prior result of Ding and Zhao on Ruzsa's number, we know that
$$
\limsup_{m\rightarrow\infty}\ell_m\le 192.
$$
Ding and Zhao then asked possible improvements on this value.		
In this paper, we prove
$$
\limsup\limits_{m\rightarrow\infty}\ell_m\leq 144.
$$
Moreover, parallel results on subtractive bases of $ \mathbb{Z}_m$ were also investigated here.
		
	\end{abstract}
	
	\maketitle
	\section{Introduction}
	
Let $\mathbb{N}$ be the set of natural numbers and $A$ a subset of $\mathbb{N}$. A remarkable conjecture of Erd\H os and Tur\'{a}n \cite{erdos-turan} states that if all sufficiently large numbers $n$ can be written as the sum of two elements of $A$, then the number of representations of $n$ as the sum of two elements of $A$ cannot be bounded. Progress on this conjecture was made by Grekos, Haddad, Helou and Pihko \cite{GHHP}, who proved that the number of representations cannot be bounded by $5$ which was later improved to $7$ by Borwein, Choi and Chu \cite{BCC}. For more related works on the Erd\H os--Tur\'{a}n conjecture, one can refer to the excellent books of Halberstam and Roth \cite{HRo}, Tao and Vu \cite{TVu}.

A set $A$ is called an {\it asymptotic basis} of natural numbers if all sufficiently large numbers $n$ can be written as the sum of two elements of $A$. Motivated by Erd\H os' question, Ruzsa \cite{Ruzsa} constructed an asymptotic basis $A$ of natural numbers which has a bounded square mean value. Furthermore, Ruzsa considered a variant on the Erd\H os--Tur\'{a}n conjecture. Let $\mathbb{Z}_m$ be the set of residue classes mod $m$ and $A$ a subset of $\mathbb{Z}_m$. For any $n\in \mathbb{Z}_m$, let
$$
\sigma_A(n)=\#\big\{(x,y):n=x+y,~x,y\in \mathbb{Z}_m\big\}.
$$
The Ruzsa number $R_m$ is defined to be the least positive integer $r$ so that there exists a set $A\subseteq \mathbb{Z}_m$ with
$
1\le \sigma_A(n)\le r ~(\forall n\in \mathbb{Z}_m).
$
In his argument, Ruzsa proved that there is an absolute constant $C$ such that $R_m\le C$ for all positive integers $m$. Employing Ruzsa's ideas, Tang and Chen \cite{Tang1} proved that $R_m\le 768$ for all sufficiently large $m$. Later, they \cite{Tang2} obtained that $R_m\le 5120$ for all positive integers $m$. In \cite{Chen}, Chen proved that $R_m\le 288$ for all positive integers $m$, which was recently improved to $R_m\le 192$ by Ding and Zhao \cite{Ding}. On the other hand, S\'{a}ndor and Yang \cite{Sandor} showed that $R_m\ge 6$ for all $m\ge 36$.

Along this line, Ding and Zhao \cite{Ding} asked an average version on Ruzsa's number. Precisely, Let $\mathcal{H}_m$ be the set of subsets $A\subseteq \mathbb{Z}_m$ such that $\sigma_A(n)\geq1$ for all $n\in \mathbb{Z}_m$. Ding and Zhao defined the following minimal mean value amount
$$
\ell_m=\min\limits_{A\in \mathcal{H}_m}\left\lbrace m^{-1}\sum_{n\in \mathbb{Z}_m}\sigma_A(n)\right\rbrace.
$$
As pointed out by Ding and Zhao, their result on $R_m\le 192$ clearly implies
\begin{align}\label{eq1-1}
\limsup_{m\rightarrow\infty}\ell_m\le 192.
\end{align}
Ding and Zhao \cite[Section 3]{Ding} thought that `{\it any improvement of the above bound \rm{(\ref{eq1-1})} would be of interest}'. In this note, we shall make some progress on the improvements of (\ref{eq1-1}).

\begin{theorem}\label{thm1}
We have
$$
\limsup\limits_{m\rightarrow\infty}\ell_m\leq 144.
$$
\end{theorem}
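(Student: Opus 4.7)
We start from the identity
\[
m^{-1}\sum_{n\in\mathbb{Z}_m}\sigma_A(n) \;=\; m^{-1}|A|^2,
\]
which holds because both sides count ordered pairs $(x,y)\in A\times A$. Thus $\ell_m=\min_{A\in\mathcal{H}_m}|A|^2/m$, and proving Theorem~\ref{thm1} reduces to constructing, for each sufficiently large $m$, a set $A\subseteq\mathbb{Z}_m$ with $A+A=\mathbb{Z}_m$ and $|A|\le (12+o(1))\sqrt{m}$.

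The plan is to refine the Ruzsa-Chen-Ding-Zhao construction of thin additive bases of $\mathbb{Z}_m$. For large $m$ one selects (via the prime number theorem or Bertrand's postulate) a prime $p\asymp\sqrt{m}$, and forms $A$ as a union of a few dilates of a ``template'' set $B$ consisting of primes lying in a carefully chosen interval, all reduced mod $m$. Using PNT (or PNT in arithmetic progressions), the size $|A|$ admits an asymptotic count, while the covering property $A+A=\mathbb{Z}_m$ follows from the equidistribution of primes in residue classes. The final constant $|A|^2/m$ emerges as a product of the structural constants entering the template and its dilates.

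The improvement from $192$ to $144$ exploits the fact that we no longer require the pointwise bound $\sigma_A(n)\le 192$ imposed by Ding--Zhao: to control only the mean of $\sigma_A$, we may allow a small set of residues $n\in\mathbb{Z}_m$ on which $\sigma_A(n)$ is much larger than $144$, as long as $\sum_n\sigma_A(n)=|A|^2\le (144+o(1))m$. Concretely, this freedom lets us trim one of the ``safety'' dilates used in \cite{Ding} and shrink the template interval, pulling $|A|$ down from $\sqrt{192\,m}\,(1+o(1))$ to $\sqrt{144\,m}\,(1+o(1))$; the ratio $144/192=3/4$ reflects this one removed factor. We then re-verify $A+A=\mathbb{Z}_m$ using the same flavor of prime-distribution input, now with tighter parameters.

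The main obstacle will be exactly this re-verification of covering after the trimming: with a smaller set there is less slack to ensure that every residue class is represented as a sum of two elements of $A$, so one needs sharper quantitative input, e.g.\ PNT in short intervals and in arithmetic progressions (Siegel--Walfisz). A secondary, routine obstacle is to make the construction available for \emph{all} sufficiently large $m$ (so as to bound $\limsup_m \ell_m$ rather than a $\liminf$); this is handled by a small perturbation of $m$ and of the chosen prime $p\asymp\sqrt{m}$, contributing only an $o(1)$ error to the final constant. Parallel bookkeeping, with subtraction replacing addition and $\sigma_A$ replaced by the corresponding difference-representation function, will yield the claimed analogue for subtractive bases.
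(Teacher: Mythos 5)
Your opening reduction is right and matches the paper: since $\sum_{n\in\mathbb{Z}_m}\sigma_A(n)=|A|^2$, the theorem reduces to exhibiting, for every large $m$, some $A\in\mathcal{H}_m$ with $|A|\le(12+o(1))\sqrt{m}$. But everything after that is either misdirected or missing. The construction you sketch --- a template of primes in an interval, with the covering property deduced from ``equidistribution of primes in residue classes'' --- is not the Ruzsa--Chen construction and is not backed by any argument: equidistribution of a thin set of primes among residue classes does not imply that every element of $\mathbb{Z}_m$ is a sum of two elements of that set (that is a Goldbach-type covering statement, and nothing in your sketch addresses it). The construction actually used (following Chen) is algebraic, not prime-based: one works in $\mathbb{Z}_p^2$ with a union of three parabolas $Q_k=\{(u,ku^2)\}$ attached to a suitable quadratic nonresidue, for which Chen's lemma gives $1\le\sigma_B(c,d)\le16$; this is then flattened to a set $A\subset\mathbb{Z}_{2p^2}$ with $|A|\le12p$ and $A+A=\mathbb{Z}_{2p^2}$. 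Your account of where $144$ comes from (trimming one ``safety dilate'' from Ding--Zhao, with the ratio $144/192=3/4$ reflecting the removed factor) is speculation with no computation behind it; in the paper the constant arises as $(24p)^2/\big((2-\varepsilon)\cdot 2p^2\big)\to 144$.

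A second concrete error is your claim that extending the construction from the special moduli to all large $m$ ``contributes only an $o(1)$ error to the final constant.'' In the paper this step passes from $\mathbb{Z}_{2p^2}$ to $\mathbb{Z}_m$ with $(2-\varepsilon)\cdot 2p^2<m<4p^2$ by taking $B=A\cup(A+r)$, which doubles $|A|$ and hence quadruples $|A|^2$ while $m$ only roughly doubles: the transfer costs a genuine factor of $2$ in $|A|^2/m$ (from $72$ on $\mathbb{Z}_{2p^2}$ to $144$ on general $\mathbb{Z}_m$), not $o(1)$. Without a correct, verified construction and without accounting for this transfer cost, the proposal does not constitute a proof.
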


Parallel to the additive bases of $\mathbb{Z}_m$, one naturally considers the corresponding results on subtractive bases of $\mathbb{Z}_m$. Let $A$ be a subset of $\mathbb{Z}_m$. For any $n\in \mathbb{Z}_m$, let
$$
\delta_A(n)=\#\big\{(x,y):n=x-y,~x,y\in \mathbb{Z}_m\big\}.
$$
In \cite{ChenSun}, Chen and Sun proved that for any positive integer $m$, there exists a subset $A$ of $\mathbb{Z}_m$ so that $\delta_A(n)\ge 1$ for any $n\in \mathbb{Z}_m$ and $\delta_A(n)\le 7$ for all $n\in \mathbb{Z}_m$ with $3$ exceptions. Their result was recently improved by Zhang \cite{Zhang} who showed that $\delta_A(n)\le 7$ could be refined to $\delta_A(n)\le 5$, again with $3$ exceptions. The exceptions cannot be removed by their method. Motivated by the minimal mean value amount defined by Ding and Zhao, we consider a parallel amount
$$
g_m:=\min\limits_{A\in \mathcal{K}_m}\left\lbrace m^{-1}\sum_{n\in \mathbb{Z}_m}\delta_A(n)\right\rbrace,
$$
where $\mathcal{K}_m$ is the set of subsets $A\subseteq \mathbb{Z}_m$ such that $\delta_A(n)\geq1$ for all $n\in \mathbb{Z}_m$.
Obviously, Zhang's bound implies that
$$
\limsup\limits_{m\rightarrow\infty}g_m\leq 5
$$
since the total sums of $\delta_A(n)$ on $3$ exceptions contribute only $O(\sqrt{m})$.
Our second main result improves upon this bound slightly.

\begin{theorem}\label{thm2}
We have
$$
\limsup\limits_{m\rightarrow\infty}g_m\leq 2.
$$
\end{theorem}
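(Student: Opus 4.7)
The main identity underlying the theorem is
$$
\sum_{n \in \mathbb{Z}_m} \delta_A(n) = |A|^2,
$$
since each ordered pair $(x,y) \in A \times A$ contributes exactly one to $\delta_A(x-y)$. Consequently $m^{-1}\sum_n \delta_A(n) = |A|^2/m$, and the theorem becomes equivalent to producing, for each sufficiently large $m$, a subset $A \subseteq \mathbb{Z}_m$ with $A - A = \mathbb{Z}_m$ and $|A| \le (1 + o(1))\sqrt{2m}$.

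The plan is to use the standard two-progression basis. Given $m$, set $p = \lceil \sqrt{m/2}\,\rceil$ and $k = \lceil m/(2p)\rceil + 1$, and define
$$
A_1 = \{0, 1, \ldots, p-1\}, \qquad A_2 = \{0, p, 2p, \ldots, (k-1)p\}, \qquad A = A_1 \cup A_2 \subseteq \mathbb{Z}_m.
$$
Since $A_1 \cap A_2 = \{0\}$ one has $|A| = p + k - 1$, and both $p$ and $k$ equal $\sqrt{m/2} + O(1)$, so
$$
\frac{|A|^2}{m} \;\le\; \frac{\bigl(2\sqrt{m/2} + O(1)\bigr)^2}{m} \;=\; 2 + O(m^{-1/2}),
$$
which will give $\limsup_m g_m \le 2$ as soon as $A - A = \mathbb{Z}_m$ is established.

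To verify the covering property, let $n \in \mathbb{Z}_m$, pick its signed representative $\tilde n \in (-m/2, m/2]$, and write $|\tilde n| = ap + b$ with $0 \le b < p$ and $0 \le a \le \lfloor m/(2p)\rfloor$. If $b = 0$, then $|\tilde n| = ap - 0 \in A_2 - A_1$. If $b > 0$, then $|\tilde n| = (a+1)p - (p-b)$; the choice of $k$ ensures $a+1 \le k-1$, so $(a+1)p \in A_2$, and obviously $p - b \in \{1,\ldots,p-1\} \subseteq A_1$, whence $|\tilde n| \in A_2 - A_1$. Negating transfers the argument to $A_1 - A_2$ when $\tilde n < 0$, and so $A - A = \mathbb{Z}_m$.

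There is no substantive obstacle here: the content is entirely in the identity $\sum_n \delta_A(n) = |A|^2$, after which one is simply looking for a minimal difference basis. The only delicate point is choosing the integer $k$ so that $(a+1)p$ remains an element of $A_2$ in the borderline case $|\tilde n|$ close to $m/2$, and a routine rounding check shows that $k = \lceil m/(2p)\rceil + 1$ suffices while preserving $k = \sqrt{m/2} + O(1)$, hence the target asymptotic $|A|^2/m \to 2$.
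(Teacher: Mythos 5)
Your proposal is correct, and it takes a genuinely different route from the paper. The paper proves Theorem 2 by invoking Singer's perfect difference sets: for $m' = p^2+p+1$ there is $A \subseteq \mathbb{Z}_{m'}$ with $|A| = p+1$ and $\delta_A(n) = 1$ for all $n \neq \overline{0}$; it then uses the prime number theorem to place $m$ in an interval $\bigl((2-\varepsilon)m', 2m'\bigr)$ and a doubling lemma ($B = A \cup (A+r)$) to transfer the basis to $\mathbb{Z}_m$, which costs a factor $4$ in $|B|^2$ against a factor $2$ in $m$, landing exactly at the constant $2$. You instead build a difference basis of $\mathbb{Z}_m$ directly and elementarily from the two arithmetic progressions $\{0,\ldots,p-1\}$ and $\{0,p,\ldots,(k-1)p\}$ with $p,k \approx \sqrt{m/2}$, covering each signed representative $\tilde n \in (-m/2, m/2]$ as an element of $A_2 - A_1$ or $A_1 - A_2$; your borderline check is sound, since $b>0$ forces $ap < m/2$, hence $a+1 \le \lceil m/(2p)\rceil = k-1$. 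Your argument needs neither Singer's theorem nor the prime number theorem, works for every $m$ rather than only sufficiently large $m$, and gives the explicit bound $g_m \le (\sqrt{2m}+2)^2/m$; the paper's route has the conceptual advantage of making visible that the loss of the factor $2$ comes entirely from the transfer step, which is what motivates their conjecture that $\lim_{m\to\infty} g_m = 1$ — a statement your construction cannot approach, since two-progression bases are intrinsically a factor $\sqrt{2}$ away from optimal size.
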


There is an old conjecture known as the {\it prime power conjecture} (see e.g. \cite{Evans,Guy, Hall}) which states that if $A$ is a subset of $\mathbb{Z}_m$ with $\delta_A(n)=1$ for any nonzero $ n\in \mathbb{Z}_m$, then $m=p^{2\alpha}+p^\alpha+1$, where $p^\alpha$ is a prime power. The reverse direction was proved by Singer \cite{Singer} as early as 1938.

As mentioned by Ding and Zhao \cite{Ding}, it is clear that $\liminf_{m\rightarrow\infty}\ell_m\ge 2$ from \cite[Lemma 2.2]{Sandor}. They \cite[Conjecture 3.3]{Ding} believed that $\liminf_{m\rightarrow\infty}\ell_m\ge 3$. Based on the results of Singer and Theorem \ref{thm2}, it seems reasonable to {\it conjecture}
$$
\lim\limits_{m\rightarrow\infty}g_m=1.
$$
These if true, reflect rather different features between the additive bases and subtractive bases.

\section{Proof of Theorem \ref{thm1}}
For any integer $k$, let
$$
Q_k=\left\{\big(u,ku^2\big):u\in \mathbb{Z}_{p}\right\}\subset \mathbb{Z}_{p}^2.
$$

We will make use of a few lemmas listed below. The first one is quoted from \cite[Lemma 2]{Chen}.

\begin{lemma}[Chen]\label{lem1}
Let $p$ be an odd prime and $m$ a quadratic nonresidue of $p$ with $m+1\not\equiv 0 \pmod{p}, 3m+1\not\equiv 0\pmod{p}$ and $m+3\not\equiv 0 \pmod{p}$. Put
$$
B=Q_{m+1}\cup Q_{m(m+1)}\cup Q_{2m}.
$$
Then for any $(c,d)\in \mathbb{Z}_{p}^2$ we have $1\le \sigma_B(c,d)\le 16$, where $\sigma_B(c,d)$ is the number of solutions of the equation $(c,d)=x+y,~x,y\in B$.
\end{lemma}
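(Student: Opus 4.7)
My plan is to exploit the parabolic parametrization $Q_{a_i} \ni (u, a_i u^2)$, writing $a_1 = m+1$, $a_2 = m(m+1)$, $a_3 = 2m$. A pair $(x,y)\in Q_{a_i}\times Q_{a_j}$ with $x+y=(c,d)$, say $x = (u, a_i u^2)$ and $y = (v, a_j v^2)$, satisfies $u+v = c$, and eliminating $v$ reduces the problem to the single quadratic
\[
(a_i + a_j)\,u^2 \;-\; 2 a_j c\, u \;+\; (a_j c^2 - d) \;=\; 0.
\]
The six pairwise sums $a_i + a_j$ evaluate to $(m+1)^2,\ 3m+1,\ m(m+3)$, and the three $2a_i$, all of which are nonzero in $\mathbb{F}_p$ precisely under the stated hypotheses. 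Consequently the count $N_{ij}(c,d) := \#\{(x,y) \in Q_{a_i} \times Q_{a_j} : x+y = (c,d)\}$ lies in $\{0,1,2\}$ and is governed by the Legendre symbol of the discriminant $\Delta_{ij} := 4\bigl((a_i+a_j)d - a_i a_j c^2\bigr)$.

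Because the three parabolas $Q_{a_i}$ pairwise intersect only at the origin, a short inclusion--exclusion on the characteristic function of $B$ yields
\[
\sigma_B(c,d) \;=\; \sum_{i,j=1}^{3} N_{ij}(c,d) \;-\; 4\cdot\mathbf{1}\bigl[(c,d)\in B\setminus\{(0,0)\}\bigr] \;-\; 8\cdot\mathbf{1}\bigl[(c,d)=(0,0)\bigr],
\]
so it suffices to bound $\sum_{i,j} N_{ij}(c,d)$. The trivial bound $N_{ij}\le 2$ yields only $\sum N_{ij} \le 18$, which is short by two. The missing savings come from the single numerical observation
\[
\tfrac{1}{4}\Delta_{12} \;=\; (m+1)^2(d - mc^2),\qquad \tfrac{1}{4}\Delta_{33} \;=\; 4m(d - mc^2),
\]
so that the Legendre symbols of $\Delta_{12}$ and $\Delta_{33}$ are opposite (using the hypothesis $\chi(m) = -1$) whenever $d - mc^2 \neq 0$. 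In particular $N_{12}$ and $N_{33}$ are never simultaneously equal to $2$, and an exhaustive check over the three cases (the factor $d-mc^2$ being a nonzero square, a nonsquare, or zero) gives $2N_{12} + N_{33} \le 4$. Summing this with the trivial bound on the remaining six contributions produces $\sum N_{ij}\le 16$ and hence the claimed $\sigma_B(c,d)\le 16$.

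The same identity also delivers the lower bound: for $(c,d)\ne (0,0)$, exactly one of $N_{12}, N_{33}$ equals $2$ (or both equal $1$ when $d = mc^2$), so at least one representation always exists; and for $(c,d) = (0,0)$ a direct check using $a_i+a_j\ne 0$ shows that the only representation is the trivial one, giving $\sigma_B((0,0)) = 1$. The principal obstacle I expect is not the algebra but the bookkeeping at the origin, which is the unique common point of all three parabolas and whose multiplicity must be subtracted consistently in both the upper- and lower-bound arguments; once that is set out carefully, the rest of the proof reduces to elementary linear algebra over $\mathbb{F}_p$ together with the single input $\chi(m) = -1$.
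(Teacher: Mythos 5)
The paper does not actually prove this lemma---it is quoted verbatim from Chen's work \cite{Chen}---but your reconstruction is correct and is in essence the original Ruzsa--Chen argument: reduce each count $N_{ij}$ to a quadratic over $\mathbb{F}_p$ with discriminant $4\bigl((a_i+a_j)d-a_ia_jc^2\bigr)$, check that all six sums $a_i+a_j$ are nonzero under the stated congruence hypotheses, and exploit the complementarity $\chi(\Delta_{12})=-\chi(\Delta_{33})$ forced by $\chi(m)=-1$ to get both the existence of a representation and the saving from $18$ down to $16$. The inclusion--exclusion bookkeeping at the origin (the only common point of the three parabolas, since $m\not\equiv 0,\pm1$) is also handled correctly, so the proposal stands as a complete proof.
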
	

The following lemma is known as the prime number theorem, see e.g. \cite{Davenport}.
\begin{lemma}\label{lem2}
Let $\pi(x)$ be the number of primes $p$ not exceeding $x$. Then
$$
\pi(x)\sim x/\log x, \quad \text{as~} x\rightarrow\infty.
$$
\end{lemma}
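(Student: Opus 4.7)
The plan is to prove the Prime Number Theorem by the classical analytic method via the Riemann zeta function. The cleanest route is to first establish $\psi(x)\sim x$ for the Chebyshev function $\psi(x)=\sum_{p^k\le x}\log p$, and then translate this to $\pi(x)\sim x/\log x$ by a short partial summation argument.

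First I would introduce $\zeta(s)=\sum_{n\ge 1}n^{-s}$ on $\Re(s)>1$ and continue it meromorphically to $\Re(s)>0$ with a single simple pole of residue $1$ at $s=1$; this is achieved, for instance, by showing that $\zeta(s)-(s-1)^{-1}$ extends holomorphically to $\Re(s)>0$ (e.g. via an Abel summation representation $\zeta(s)=\frac{s}{s-1}-s\int_1^\infty \{x\}x^{-s-1}\,\mathrm{d}x$). Combined with the Euler product $\zeta(s)=\prod_p(1-p^{-s})^{-1}$, logarithmic differentiation yields the generating identity
$$
-\frac{\zeta'(s)}{\zeta(s)}=\sum_{n\ge 1}\Lambda(n)\,n^{-s},\qquad \Re(s)>1,
$$
which encodes the distribution of prime powers in the singularities of a meromorphic Dirichlet series.

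The main obstacle, as in every proof of the PNT, is the non-vanishing of $\zeta(s)$ on the vertical line $\Re(s)=1$. I would handle this through the classical Mertens trick based on the pointwise inequality $3+4\cos\theta+\cos 2\theta\ge 0$, which gives $|\zeta(\sigma)^3\zeta(\sigma+it)^4\zeta(\sigma+2it)|\ge 1$ for all $\sigma>1$ and real $t$; a hypothetical zero of $\zeta$ at $1+it_0$ would then be incompatible with the order of the pole of $\zeta$ at $s=1$ in the limit $\sigma\to 1^+$.

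With non-vanishing on $\Re(s)=1$ secured, I would conclude by a Tauberian argument applied to $-\zeta'/\zeta$, either the Wiener--Ikehara theorem or Newman's short contour-integration method, both of which yield $\psi(x)\sim x$. A standard partial summation, together with the elementary estimate $\psi(x)-\vartheta(x)=O(\sqrt{x}\log^2 x)$ where $\vartheta(x)=\sum_{p\le x}\log p$, then delivers the asserted asymptotic $\pi(x)\sim x/\log x$ as $x\to\infty$.
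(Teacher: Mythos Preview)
Your outline is a correct sketch of the classical analytic proof of the Prime Number Theorem: analytic continuation of $\zeta(s)$, the identity $-\zeta'/\zeta=\sum\Lambda(n)n^{-s}$, Mertens' $3$--$4$--$1$ argument for non-vanishing on $\Re(s)=1$, a Tauberian step to get $\psi(x)\sim x$, and partial summation to recover $\pi(x)\sim x/\log x$. Each of these ingredients is standard and the logical dependencies are in the right order, so as a proof plan there is no gap.

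The paper, however, does not prove this lemma at all: it simply records the statement and cites a textbook (Davenport, \emph{Multiplicative Number Theory}) for the proof. This is entirely appropriate here, since the paper only needs the qualitative consequence that for any fixed $\varepsilon>0$ and all sufficiently large $x$ there is a prime in the interval $\big((1-\varepsilon)x,\,x\big)$, which follows immediately from $\pi(x)\sim x/\log x$. So your approach is not wrong, merely far more than what the paper supplies or requires; in the context of this article a one-line citation is the expected treatment.
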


The third lemma is simple but important for proofs of our theorem.
\begin{lemma}\label{lem3}
Let $m$ be a positive integer and $A$ a subset of $\mathbb{Z}_m$. Then
$$
\sum_{n\in \mathbb{Z}_m}\sigma_A(n)=|A|^2,
$$
where $|A|$ denotes the number of elements of $A$.
\end{lemma}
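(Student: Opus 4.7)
The plan is a one-line double-counting argument. By the very definition of $\sigma_A$, the quantity $\sigma_A(n)$ equals the cardinality of the fiber
$$
F_n=\{(x,y)\in A\times A:x+y=n\}
$$
over $n\in\mathbb{Z}_m$. The sets $\{F_n\}_{n\in\mathbb{Z}_m}$ form a partition of $A\times A$, since for every ordered pair $(x,y)\in A\times A$ the sum $x+y$ takes a single, well-defined value in $\mathbb{Z}_m$ and hence places $(x,y)$ into exactly one fiber $F_n$.

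Consequently I would simply swap the order of summation and identify $\sum_{n\in\mathbb{Z}_m}\sigma_A(n)$ with the total count
$$
\sum_{n\in\mathbb{Z}_m}|F_n|=|A\times A|=|A|^2,
$$
which is the claimed identity.

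There is no real obstacle here; the statement is essentially the observation that addition is a well-defined map $A\times A\to\mathbb{Z}_m$, so summing the fiber sizes recovers the size of the domain. The same scheme will apply verbatim to $\delta_A$ in Section~3 (giving $\sum_n\delta_A(n)=|A|^2$ as well), which is presumably why the authors isolate it as a lemma for reuse.
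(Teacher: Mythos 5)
Your proof is correct and is essentially identical to the paper's: the authors likewise interchange the order of summation, observing that the constraint $a_1+a_2\in\mathbb{Z}_m$ is vacuous, so the sum collapses to $|A|^2$. Nothing further to add.
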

\begin{proof}
Clearly, we have
$$
\sum\limits_{n\in \mathbb{Z}_m}\sigma_A(n)=\sum\limits_{n\in\mathbb{Z}_m}\sum\limits_{\substack{a_1+a_2=n\\a_1,a_2\in A}}1=\sum\limits_{\substack{a_1,a_2\in A\\a_1+a_2\in \mathbb{Z}_m}}1=\sum\limits_{a_1,a_2\in A}1=|A|^{2}.
$$
This completes the proof of Lemma \ref{lem3}.
\end{proof}

The fourth lemma deals with a particular case $\mathbb{Z}_{2p^2}$ related to the theme.
\begin{lemma}\label{lem4}
Let $p$ be a prime greater than $11$. Then there is a subset $A\subset \mathbb{Z}_{2p^2}$ with $|A|\le 12p$ so that $\sigma_A(n)\ge 1$ for any $n\in \mathbb{Z}_{2p^2}$.
\end{lemma}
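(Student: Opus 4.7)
\emph{Proof plan.} My strategy is a two-step lift: first transfer the basis from $\mathbb{Z}_p^2$ to $\mathbb{Z}_{p^2}$, then from $\mathbb{Z}_{p^2}$ to $\mathbb{Z}_{2p^2}$, each step at most doubling the size. Starting from $|B|\le 3p$ (via Lemma \ref{lem1}) this yields $|A|\le 12p$. To invoke Lemma \ref{lem1}, I first check that a suitable quadratic nonresidue $m$ exists: the three conditions exclude only the residues $-1,-1/3,-3\pmod{p}$, while the number of quadratic nonresidues is $(p-1)/2\ge 6$ when $p>11$, so such $m$ can be chosen. This produces an additive basis $B=Q_{m+1}\cup Q_{m(m+1)}\cup Q_{2m}\subset\mathbb{Z}_p^2$ with $|B|\le 3p$.

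For the first lift, define $\iota\colon\mathbb{Z}_p\times\mathbb{Z}_p\to\mathbb{Z}_{p^2}$ by $\iota(u,v)=u+pv$, taking the standard representatives $u,v\in\{0,\ldots,p-1\}$. A short carry computation gives the key identity
$$
\iota(b_1)+\iota(b_2)\equiv \iota(b_1+b_2)+p\cdot\mathbbm{1}_{u_1+u_2\ge p}\pmod{p^2},
$$
since the possible $v$-carry contributes $p^2\equiv 0$ while a $u$-carry contributes exactly $p$. I then set $A'=\iota(B)\cup(\iota(B)-p)\subset\mathbb{Z}_{p^2}$, so $|A'|\le 6p$. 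Given $n=\iota(a,c)\in\mathbb{Z}_{p^2}$, I pick $b_1,b_2\in B$ with $b_1+b_2=(a,c)$ (possible since $B$ is an additive basis of $\mathbb{Z}_p^2$). Then $\iota(b_1)+\iota(b_2)=n$ in the no-carry case and $\iota(b_1)+(\iota(b_2)-p)=n$ in the carry case, so $A'$ is an additive basis of $\mathbb{Z}_{p^2}$.

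For the second lift, I identify $A'$ with its representatives in $\{0,\ldots,p^2-1\}\subset\mathbb{Z}_{2p^2}$ and set $A=A'\cup(A'+p^2)$, so $|A|=2|A'|\le 12p$ because the two pieces lie in disjoint integer intervals. For $n\in\mathbb{Z}_{2p^2}$, I pick $a_1',a_2'\in A'$ with $a_1'+a_2'\equiv n\pmod{p^2}$; viewed in $\mathbb{Z}_{2p^2}$, the integer sum $a_1'+a_2'\in\{0,\ldots,2p^2-2\}$ is either $n$ or $n+p^2$, and in the latter case I correct by replacing $a_1'$ with $a_1'+p^2\in A'+p^2\subset A$ (which adds $p^2$ and so produces $n+2p^2\equiv n\pmod{2p^2}$). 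Hence $\sigma_A(n)\ge 1$ for every $n\in\mathbb{Z}_{2p^2}$, completing the argument. The main technical point is the carry identity above; the auxiliary pieces $\iota(B)-p$ and $A'+p^2$ are introduced precisely to absorb the $u$-carry obstruction (in the first lift) and the $p^2$-coset obstruction (in the second lift).
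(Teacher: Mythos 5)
Your proof is correct, but it is organized differently from the paper's. The paper performs a single lift: it embeds $B$ directly into $\mathbb{Z}_{2p^2}$ via $(u,v)\mapsto u+2pv$, sets $A_1$ equal to the image and $A=A_1\cup(A_1+p)$, and then, writing $n=c+2pd$ with the representative $c$ chosen in $[p,3p-1]$, observes that the carry in the $u$-coordinate is $ps$ with $s\in\{0,1,2\}$, each value of $s$ being absorbed by translating zero, one, or both summands by $p$ (the $v$-carry contributes a multiple of $2p^2$ and vanishes). Your version instead factors the lift through $\mathbb{Z}_{p^2}$: the $u$-carry is absorbed by the translate $\iota(B)-p$, and the passage from modulus $p^2$ to modulus $2p^2$ is absorbed by the translate $+p^2$ --- this second step being essentially the $m_2=2m_1$ case of the paper's Lemma \ref{lem5}. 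Both arguments are sound and both meet the stated bound $|A|\le 12p$, but the one-step embedding is more economical: since $(u,v)\mapsto u+2pv$ is injective on $\{0,\dots,p-1\}^2$, the paper's set is a union of only two translates of a copy of $B$ and so actually has at most $2|B|\le 6p$ elements (its own count $|A_1|\le 2|B|$ is needlessly generous), whereas your construction genuinely uses four translates of $B$ and needs the full $12p$. One small imprecision in your write-up: in the second lift the integer sum $a_1'+a_2'$ can also equal $n-p^2$, not only $n$ or $n+p^2$; this costs nothing, since the same correction $a_1'\mapsto a_1'+p^2$ handles that case, but the dichotomy should be stated as a congruence modulo $2p^2$ rather than as an equality of integers.
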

\begin{proof}
Let $p$ be a prime greater than $11$. Then there are at least
$(p-1)/2>5$ quadratic nonresidues mod $p$, which means that there is some quadratic nonresidue $m$ so that
$$
m+1\not\equiv 0 \pmod{p}, \quad 3m+1\not\equiv 0\pmod{p} \quad \text{and} \quad  m+3\not\equiv 0 \pmod{p}.
$$
Let
$
B=Q_{m+1}\cup Q_{m(m+1)}\cup Q_{2m},
$
$$
A_1=\big\lbrace u+2pv:(u,v)\in B\big\rbrace \quad \text{and} \quad  A=A_1\cup(A_1+p),
$$
where $A_1+p:=\{a_1+p:a_1\in A_1\}$. Obviously, $A$ can be viewed as a subset of $\mathbb{Z}_{2p^2}$.

We firstly show that $\sigma_A(n)\ge 1$ for any $n\in \mathbb{Z}_{2p^2}$, i.e., $A\in \mathcal{H}_{2p^2}$ by the definition of $\mathcal{H}_{m}$. We follow the proof of Chen \cite[Theorem 1]{Chen}. For any $(u,v)\in B$, we appoint that $0\le u,v\le p-1$. Let $n$ be an element of $\mathbb{Z}_{2p^2}$ with $0\le n\le 2p^2-1$. Then we can assume that
$$
n=c+2pd
$$
with $p\le c\le 3p-1$ and $-1\le d\le p-1$. By Lemma \ref{lem1}, there are $(u_1,v_1),(u_2,v_2)\in B$ so that
$$
(c,d)=(u_1,v_1)+(u_2,v_2) \pmod{p}
$$
or in other words,
$$
c\equiv u_1+u_2 \pmod{p} \quad \text{and} \quad d\equiv v_1+v_2 \pmod{p}.
$$
Suppose that
$$
c=u_1+u_2+ps \quad \text{and} \quad d=v_1+v_2+ph,
$$
with $s,h\in \mathbb{Z}$, then $s=0$ or $1$ or $2$ since $0\le u_1+u_2\le 2p-2$ and $p\le c\le 3p-1$. Hence, we get
\begin{align*}
n&=c+2pd\\
&= u_1+2pv_1+u_2+2pv_2+ps+2p^2h \\
&\equiv u_1+2pv_1+u_2+2pv_2+ps \pmod{2p^2}.
\end{align*}
If $s=0$, then in $\mathbb{Z}_{2p^2}$ we have
$$
n= (u_1+2pv_1)+(u_2+2pv_2)\in A_1+A_1\subset A+A.
$$
If $s=1$, then in $\mathbb{Z}_{2p^2}$ we have
$$
n= (u_1+2pv_1+p)+(u_2+2pv_2)\in (A_1+p)+A_1\subset A+A.
$$
If $s=2$, then in $\mathbb{Z}_{2p^2}$ we have
$$
n= (u_1+2pv_1+p)+(u_2+2pv_2+p)\in (A_1+p)+(A_1+p)\subset A+A.
$$
Hence, in any case we have $\sigma_A(n)\ge 1$ for $n\in \mathbb{Z}_{2p^2}$.

It can be easily seen that $|A_1|\le 2|B|$
from the construction.
Therefore, for the set $A$ constructed above we clearly have
$$
|A|\le |A_1|+|A_1+p|=2|A_1|\le 2\times 2|B|=4|B|
$$
and
$$
|B|\leqslant|Q_{m+1}|+|Q_{m(m+1)}|+|Q_{2m}|=3p,
$$
from which it follows that
$$
|A|\le 12p.
$$
This completes the proof of Lemma \ref{lem4}.
\end{proof}

The last lemma is a relation between the bases of $\mathbb{Z}_{m_1}$ and $\mathbb{Z}_{m_2}$ with certain constraints.
\begin{lemma}\label{lem5}
Let $\varepsilon>0$ be an arbitrarily small number. Let $m_1$ and $m_2$ be two positive integers with $(2-\varepsilon)m_1<m_2<2m_1$. Suppose that $A$ is a subset of $\mathbb{Z}_{m_1}$ with $\sigma_A(n)\ge 1$ for any $n\in \mathbb{Z}_{m_1}$, then there is a subset $B$ of $\mathbb{Z}_{m_2}$ with $|B|\le 2|A|$ such that $\sigma_B(n)\ge 1$ for any $n\in \mathbb{Z}_{m_2}$.
\end{lemma}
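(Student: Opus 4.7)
My plan is to take $B = A \cup (A + m_1)$ inside $\mathbb{Z}_{m_2}$, where $A$ is identified with its canonical lift to $\{0, 1, \ldots, m_1 - 1\}$. The bound $|B| \leq 2|A|$ is then immediate. To show that $B$ is an additive basis of $\mathbb{Z}_{m_2}$, I note that any sum $x + y$ with $x, y \in B$ is congruent modulo $m_2$ to an integer of the form $a_1 + a_2 + j m_1$ with $a_1, a_2 \in A$ and $j \in \{0, 1, 2\}$, where $j$ counts how many of $x, y$ come from the shifted copy $A + m_1$. So the task reduces to: for each $n \in \{0, 1, \ldots, m_2 - 1\}$, produce a triple $(a_1, a_2, j)$ with $a_1 + a_2 + j m_1 \equiv n \pmod{m_2}$. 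The input from the hypothesis on $A$ is that the integer sumset $A + A \subseteq [0, 2m_1 - 2]$ meets every residue class modulo $m_1$; equivalently, for each $r \in \{0, 1, \ldots, m_1 - 1\}$ at least one of $r$ or $r + m_1$ lies in $A + A$.

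I would then run a case analysis. The easy cases are (i) $n \in A + A$ as an integer, where taking $j = 0$ works; and (ii) $n \in [m_1, m_2)$ with $n - m_1 \in A + A$, where taking $j = 1$ gives $(n - m_1) + m_1 = n$. The remaining case is $n \in [0, m_1)$ with $n \notin A + A$, which forces $n + m_1 \in A + A$. Here the naive choice $s = n + m_1$ fails, since $(j+1) m_1 \not\equiv 0 \pmod{m_2}$ for any $j \in \{0,1,2\}$. The key trick is to switch residue classes: set $\delta = m_2 - m_1$ and consider $r' = (n + \delta) \bmod m_1$, then pick a representative $s \in A + A$ in class $r'$, which exists by the basis property. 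Using the identity $m_1 + \delta = m_2 \equiv 0 \pmod{m_2}$, one verifies that in each sub-case $a_1 + a_2 + j m_1$ equals $n + m_2$ as an integer (hence $\equiv n$ in $\mathbb{Z}_{m_2}$): if $s = n + \delta$ take $j = 1$; if $s = n + \delta + m_1$ take $j = 0$; if $s = n + \delta - m_1$, which is only possible when $n \geq m_1 - \delta$, take $j = 2$.

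The main technical nuisances will be verifying that the chosen $s$ actually lies in $[0, 2m_1 - 2]$ in each sub-case, and treating the borderline $n = m_1 - \delta - 1$ separately, where $n + \delta + m_1 = 2m_1 - 1$ falls just outside the range and the argument forces $s = n + \delta = m_1 - 1$. I do not anticipate any deeper obstacle; the hypothesis $(2 - \varepsilon) m_1 < m_2$ (for $\varepsilon < 1/2$) is used only to guarantee $3m_1 < 2 m_2$, so that reducing any integer in $[0, 4m_1 - 2]$ modulo $m_2$ only ever subtracts $0$, $m_2$, or $2 m_2$, keeping the enumeration $j \in \{0, 1, 2\}$ exhaustive.
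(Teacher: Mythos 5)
Your proposal is correct and is essentially the paper's own argument: both take $B$ to be $A$ together with a single shifted copy of $A$ and then run a case analysis on whether the integer representative of $a_1+a_2$ overflows past $m_1$, patching with the identity $m_1+(m_2-m_1)=m_2\equiv 0 \pmod{m_2}$. The only (immaterial) difference is that you shift by $m_1$ while the paper shifts by $r=m_2-m_1$ (the negative of your shift mod $m_2$), which merely reorganizes the sub-cases; as you note, the lower bound $(2-\varepsilon)m_1<m_2$ plays no real role in either version.
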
	
\begin{proof}
Suppose that $m_2=m_1+r$, then $(1-\varepsilon)m_1<r< m_1$. Let
$$
B=A\cup \{a+r:a\in A\}.
$$
Then we have $|B|\le 2|A|$. It remains to prove $\sigma_B(n)\ge 1$ for any $n\in \mathbb{Z}_{m_2}$.

Without loss of generality, we may assume $0\le a\le m_1-1$ for any $a\in A$.
For $0\le n\le m_1-1$, there are two integers
$a_1,a_2\in A$ so that
$
n\equiv a_1+a_2\pmod{m_1}.
$
Since $0\le a_1+a_2\le 2m_1-2$, it follows that
$$
n=a_1+a_2 \quad \text{or} \quad n=a_1+a_2-m_1.
$$
If $n=a_1+a_2$, then clearly we have $n\equiv a_1+a_2\pmod{m_2}$. If $n=a_1+a_2-m_1$, then
$$
n+m_2=n+m_1+r=a_1+(a_2+r),
$$
which means that $n\equiv a_1+(a_2+r)\pmod{m_2}$.  In both cases we have
$\sigma_B(n)\ge 1$ for any $0\le n\le m_1-1$. We are left over to consider the case $m_1\le n\le m_2-1$. In this range, we have
$$
0<n-r\le m_2-1-r= m_1-1.
$$
Thus, there are two elements $\widetilde{a_1},\widetilde{a_2}$ of $A$ so that
$$
n-r\equiv \widetilde{a_1}+\widetilde{a_2}\pmod{m_1}.
$$
Again by the constraint $0\le \widetilde{a_1}+\widetilde{a_2}\le 2m_1-2$ we have
$$
n-r=\widetilde{a_1}+\widetilde{a_2} \quad \text{or} \quad n-r=\widetilde{a_1}+\widetilde{a_2}-m_1.
$$
If $n-r=\widetilde{a_1}+\widetilde{a_2}$, then we clearly have $n-r\equiv\widetilde{a_1}+\widetilde{a_2}\pmod{m_2}$. Otherwise, we have $n-r=\widetilde{a_1}+\widetilde{a_2}-m_1$. So, it can now be deduced that
$$
n+m_2=\widetilde{a_1}+r+\widetilde{a_2}+r,
$$
which is equivalent to say $n\equiv (\widetilde{a_1}+r)+(\widetilde{a_2}+r)\pmod{m_2}$.
\end{proof}

We now provide the proof of Theorem \ref{thm1}.	

\begin{proof}[Proof of Theorem \ref{thm1}]
Let $\varepsilon>0$ be an arbitrarily small given number. Then by Lemma \ref{lem2}, there is some prime $p$ so that
\begin{align}\label{eq2-1}
\sqrt{\frac{m}{4}}<p<\sqrt{\frac{m}{2(2-\varepsilon)}},
\end{align}
provided that $m$ is sufficiently large (in terms of $\varepsilon$). By Lemma \ref{lem4}, there is a subset $A\subset \mathbb{Z}_{2p^2}$ with $|A|\le 12p$ so that $\sigma_{A}(n)\ge 1$ for any $n\in \mathbb{Z}_{2p^2}$. From (\ref{eq2-1}), we know that
\begin{align}\label{eq2-2}
(2-\varepsilon)2p^2<m<2\times2p^2.
\end{align}
Thus, by Lemma \ref{lem5} there is a subset $B$ of $\mathbb{Z}_{m}$ with
\begin{align}\label{eq2-3}
|B|\le 2|A|\le 24p
\end{align}
such that $\sigma_B(n)\ge 1$ for any $n\in \mathbb{Z}_{m}$. Hence, by Lemma \ref{lem3} we have
$$
\ell_m=\min\limits_{\widetilde{A}\in \mathcal{H}_m}\left\lbrace m^{-1}\sum_{n\in \mathbb{Z}_m}\sigma_{\widetilde{A}}(n)\right\rbrace\le m^{-1}\sum_{n\in \mathbb{Z}_m}\sigma_{B}(n)=\frac{|B|^2}{m}.
$$
Employing (\ref{eq2-2}) and (\ref{eq2-3}), we get
$$
\frac{|B|^2}{m}\le \frac{(24p)^2}{(2-\varepsilon)2p^2}=144\times\frac{2}{2-\varepsilon}.
$$
Hence, it follows that
$$
\limsup_{m\rightarrow\infty}\ell_m\le 144\times\frac{2}{2-\varepsilon}
$$
for any $\varepsilon>0$, which clearly means that
$$
\limsup_{m\rightarrow\infty}\ell_m\le 144.
$$
This completes the proof of Theorem \ref{thm1}.
\end{proof}

\section{Proof of Theorem \ref{thm2}}
The proof of Theorem \ref{thm2} is based on the following remarkable result of Singer \cite{Singer}.

\begin{lemma}[Singer]\label{lem3-1}
Let $p$ be a prime. Then there exists a subset $A$ of $\mathbb{Z}_{p^{2}+p+1}$ so that $\delta_A(n)=1$ for any $n\in \mathbb{Z}_{p^{2}+p+1}$ with $n\neq \overline{0}$.
\end{lemma}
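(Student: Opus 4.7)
The plan is to realize $\mathbb{Z}_{p^2+p+1}$ as the point set of the projective plane $\mathrm{PG}(2,p)$ under a cyclic (Singer) action, and to take $A$ to be the index set of a projective line. Writing $n=p^2+p+1$, observe that $\mathbb{F}_{p^3}^*$ is cyclic of order $(p-1)n$ and its subgroup $\mathbb{F}_p^*$ has index $n$, so the quotient $\mathbb{F}_{p^3}^*/\mathbb{F}_p^*$ is cyclic of order $n$. Identifying $\mathbb{F}_{p^3}$ with the $\mathbb{F}_p$-vector space $\mathbb{F}_p^3$, this quotient is precisely the point set of $\mathrm{PG}(2,p)$. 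Fixing a generator $\theta$ of $\mathbb{F}_{p^3}^*$, multiplication by $\theta$ induces a cyclic permutation $\sigma$ of order $n$ on these points.

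Next I would fix a basepoint $P_0$ and identify $\mathrm{PG}(2,p)$ with $\mathbb{Z}_n$ via $k\leftrightarrow\sigma^k(P_0)$. A line $L\subset\mathrm{PG}(2,p)$, i.e.\ the image of a $2$-dimensional $\mathbb{F}_p$-subspace $V\subset\mathbb{F}_{p^3}$, contains exactly $p+1$ points; let $A\subset\mathbb{Z}_n$ be the corresponding index set, so $|A|=p+1$. For nonzero $d\in\mathbb{Z}_n$, the count $\delta_A(d)$ records the pairs $(a,b)\in A\times A$ with $a-b\equiv d\pmod n$; under our identification each such pair corresponds to a point $Q\in L$ with $\sigma^d(Q)\in L$, so $\delta_A(d)=|L\cap\sigma^{-d}(L)|$.

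The decisive step is to show $\sigma^{-d}(L)\ne L$ whenever $0<d<n$. If instead $\theta^dV=V$, then $V$ would be a module over the $\mathbb{F}_p$-subalgebra $\mathbb{F}_p[\theta^d]\subseteq\mathbb{F}_{p^3}$. Now $\theta^d\in\mathbb{F}_p^*$ iff $n\mid d$ (since $\mathbb{F}_p^*$ has index $n$ in the cyclic group $\mathbb{F}_{p^3}^*$), hence for $0<d<n$ we have $\theta^d\notin\mathbb{F}_p$, and as $[\mathbb{F}_{p^3}:\mathbb{F}_p]=3$ is prime this forces $\mathbb{F}_p[\theta^d]=\mathbb{F}_{p^3}$; then $V$ must be $\{0\}$ or all of $\mathbb{F}_{p^3}$, contradicting $\dim_{\mathbb{F}_p}V=2$. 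Given this non-fixing property, two distinct lines in $\mathrm{PG}(2,p)$ meet in exactly one point, so $|L\cap\sigma^{-d}(L)|=1$ and $\delta_A(d)=1$. As a sanity check, $\sum_{d\ne 0}\delta_A(d)=|A|(|A|-1)=(p+1)p=n-1$, which matches the value $1$ across all $n-1$ nonzero residues. The main obstacle is precisely the non-fixing claim; everything else is bookkeeping once that field-theoretic argument is in place.
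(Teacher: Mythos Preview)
Your argument is correct: this is the classical Singer construction, and every step checks out. The identification of $\mathrm{PG}(2,p)$ with $\mathbb{F}_{p^3}^*/\mathbb{F}_p^*$, the bijection $\delta_A(d)=|L\cap\sigma^{-d}(L)|$, and the field-theoretic non-fixing argument (using that $[\mathbb{F}_{p^3}:\mathbb{F}_p]=3$ is prime to rule out an intermediate field containing $\theta^d$) are all sound.

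By way of comparison, the paper does not prove this lemma at all: it is simply quoted as Singer's 1938 theorem and cited to the original source, with no argument given. So you have supplied a genuine proof where the paper only gives a reference. Your proof is essentially Singer's own construction (phrased in modern field-theoretic language rather than his projective-geometric terms), so there is no real methodological divergence to discuss --- you have just filled in what the paper left to the literature.
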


The next lemma is a variant of Lemma \ref{lem3}.
\begin{lemma}\label{lem3-2}
Let $m$ be a positive integer and $A$ a subset of $\mathbb{Z}_m$. Then
$$
\sum_{n\in \mathbb{Z}_m}\delta_A(n)=|A|^2,
$$
where $|A|$ denotes the number of elements of $A$.
\end{lemma}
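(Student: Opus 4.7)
The plan is to mimic verbatim the double-counting argument used in Lemma \ref{lem3}, simply replacing the sum $a_1+a_2$ by the difference $a_1-a_2$. Since every ordered pair $(a_1,a_2)\in A\times A$ produces a unique element $a_1-a_2\in\mathbb{Z}_m$, swapping the order of summation collapses the double sum to a count of ordered pairs.

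Concretely, I would unfold the definition of $\delta_A(n)$ and interchange the outer and inner summations:
$$
\sum_{n\in\mathbb{Z}_m}\delta_A(n)=\sum_{n\in\mathbb{Z}_m}\sum_{\substack{a_1-a_2=n\\ a_1,a_2\in A}}1=\sum_{\substack{a_1,a_2\in A\\ a_1-a_2\in\mathbb{Z}_m}}1=\sum_{a_1,a_2\in A}1=|A|^2,
$$
where the second-to-last equality uses the fact that $a_1-a_2$ automatically lies in $\mathbb{Z}_m$ whenever $a_1,a_2\in A\subseteq\mathbb{Z}_m$, so the constraint is vacuous.

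There is no real obstacle here: the identity is a pure combinatorial rearrangement, structurally identical to Lemma \ref{lem3}, and the only observation needed is that subtraction in $\mathbb{Z}_m$ is a well-defined binary operation, so no pair is lost or double-counted when we pass from indexing by $n$ to indexing by $(a_1,a_2)$.
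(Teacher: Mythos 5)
Your proposal is correct and coincides with the paper's own proof: the same interchange of summation, the same observation that the constraint $a_1-a_2\in\mathbb{Z}_m$ is vacuous, and the same collapse to a count of ordered pairs. Nothing further is needed.
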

\begin{proof}
It is clear that
$$
\sum\limits_{n\in \mathbb{Z}_m}\delta_A(n)=\sum\limits_{n\in\mathbb{Z}_m}\sum\limits_{\substack{a_1-a_2=n\\a_1,a_2\in A}}1=\sum\limits_{\substack{a_1,a_2\in A\\a_1-a_2\in \mathbb{Z}_m}}1=\sum\limits_{a_1,a_2\in A}1=|A|^{2}.
$$
This completes the proof of Lemma \ref{lem3-2}.
\end{proof}

We need another auxiliary lemma.
\begin{lemma}\label{lem3-3}
Let $\varepsilon>0$ be an arbitrarily small number. Let $m$ be a positive integer and $p$ a prime number with
$$
(2-\varepsilon)\big(p^{2}+p+1\big)<m<2\big(p^{2}+p+1\big).
$$
Suppose that $A$ is a subset of $\mathbb{Z}_{p^{2}+p+1}$ with $\delta_A(n)\ge 1$ for any $n\in \mathbb{Z}_{p^{2}+p+1}$, then there is a subset $B$ of $\mathbb{Z}_{m}$ with $|B|\le 2|A|$ such that $\delta_B(n)\ge 1$ for any $n\in \mathbb{Z}_{m}$.
\end{lemma}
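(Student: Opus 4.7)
The plan is to mirror the proof of Lemma \ref{lem5} almost verbatim, since the constraint $(2-\varepsilon)m_1 < m_2 < 2m_1$ (with $m_1 = p^2+p+1$, $m_2 = m$) is exactly the same hypothesis on sizes. First I would write $m = m_1 + r$ where $r = m - (p^2+p+1)$, so that $(1-\varepsilon)m_1 < r < m_1$, and then define
\[
B = A \cup \{a+r : a \in A\} \pmod{m_2},
\]
viewing $A$ as a subset of $\{0,1,\dots,m_1-1\}$. The size bound $|B| \le 2|A|$ is then immediate, so the entire task is to verify that $\delta_B(n) \ge 1$ for every $n \in \mathbb{Z}_{m_2}$.

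Next I would handle two ranges for $n$. For $0 \le n \le m_1 - 1$, I use that $A$ is a subtractive basis of $\mathbb{Z}_{m_1}$ to produce $a_1,a_2 \in A$ with $a_1 - a_2 \equiv n \pmod{m_1}$. Since $a_1 - a_2 \in [-(m_1-1), m_1-1]$, either $a_1 - a_2 = n$ (and then $(a_1,a_2) \in B\times B$ witnesses $\delta_B(n)\ge 1$ directly), or $a_1 - a_2 = n - m_1$, in which case I pick $b_1 = a_1 \in B$ and $b_2 = a_2 + r \in B$, so that $b_1 - b_2 = (n - m_1) - r = n - m_2 \equiv n \pmod{m_2}$. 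For $m_1 \le n \le m_2 - 1$, I look at $n - r$: the constraints $m_2 < 2m_1$ and $r < m_1$ force $n - r \in [\,2m_1 - m_2,\, m_1 - 1\,] \subset [1, m_1 - 1]$, so I can apply the subtractive-basis property of $A$ in $\mathbb{Z}_{m_1}$ to find $\widetilde{a}_1, \widetilde{a}_2 \in A$ with $\widetilde{a}_1 - \widetilde{a}_2 \equiv n - r \pmod{m_1}$. Again there are two sub-cases: if $\widetilde{a}_1 - \widetilde{a}_2 = n - r$, I take $b_1 = \widetilde{a}_1 + r$ and $b_2 = \widetilde{a}_2$, so $b_1 - b_2 = n$; if $\widetilde{a}_1 - \widetilde{a}_2 = n - r - m_1$, I take $b_1 = \widetilde{a}_1$ and $b_2 = \widetilde{a}_2$, so $b_1 - b_2 = n - r - m_1 = n - m_2 \equiv n \pmod{m_2}$.

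The main (though mild) obstacle is simply checking that in each sub-case the chosen elements $b_1, b_2$ genuinely lie in $B$ and that the integer-level identity really reduces to $n$ modulo $m_2$; this is where the hypothesis $m_2 < 2m_1$ is used, because it guarantees both the nonemptiness of the range $[2m_1-m_2,\, m_1-1]$ in the second range of $n$ and the correct sign of $a_1 - a_2 - r$ in the first range. Once the bookkeeping of the four sub-cases is recorded, the proof is essentially a subtractive transcription of Lemma \ref{lem5}, and nothing deeper than this four-way case split is required.
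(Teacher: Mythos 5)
Your proposal is correct and follows essentially the same route as the paper: the same set $B = A \cup (A+r)$ with $r = m - (p^2+p+1)$, the same split into the ranges $0 \le n \le p^2+p$ and $p^2+p+1 \le n \le m-1$, and the same two sub-cases in each range depending on whether the representation modulo $p^2+p+1$ holds at the integer level or is off by $p^2+p+1$. Your explicit choice of witnesses $b_1 = \widetilde{a}_1 + r$, $b_2 = \widetilde{a}_2$ in the second range is in fact slightly cleaner than the paper's phrasing of that sub-case.
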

\begin{proof}
Suppose that $m=\big(p^2+p+1\big)+r$, then $(1-\varepsilon)\big(p^2+p+1\big)<r< \big(p^2+p+1\big)$. Let
$$
B=A\cup \{a+r:a\in A\}.
$$
Then we have $|B|\le 2|A|$. It remains to prove $\delta_B(n)\ge 1$ for any $n\in \mathbb{Z}_{m}$.

Without loss of generality, we appoint that $0\le a\le p^2+p$ for any $a\in A$.
For $0\le n\le p^2+p$, there are two integers
$a_1,a_2\in A$ so that
$$
n\equiv a_1-a_2\pmod{p^2+p+1},
$$
which means that
$$
n= a_1-a_2 \quad \text{or} \quad n=a_1-a_2+\big(p^2+p+1\big)
$$
since $-p^2-p\le a_1-a_2\le p^2+p$.
If $n=a_1-a_2$, then we clearly have $n\equiv a_1-a_2\pmod{m}$. If $n=a_1-a_2+\big(p^2+p+1\big)$, then
$$
n-m=n-\big(p^2+p+1\big)-r=a_1-(a_2+r),
$$
from which it can be deduced that $n\equiv a_1-(a_2+r)\pmod{m}$.  In both cases we have
$\delta_B(n)\ge 1$ for any $0\le n\le p^2+p$. We are left over to consider the case $p^2+p+1\le n\le m-1$. In this case, we have
$$
0<n-r\le m-1-r=p^2+p.
$$
Thus, there are two elements $\widetilde{a_1},\widetilde{a_2}$ of $A$ so that
$$
n-r\equiv \widetilde{a_1}-\widetilde{a_2}\pmod{m}.
$$
Using again the constraint $-p^2-p\le \widetilde{a_1}-\widetilde{a_2}\le p^2+p$ we get
$$
n-r=\widetilde{a_1}-\widetilde{a_2} \quad \text{or} \quad n-r=\widetilde{a_1}-\widetilde{a_2}+\big(p^2+p+1\big).
$$
If $n-r=\widetilde{a_1}-\widetilde{a_2}$, then we clearly have $n-r\equiv \widetilde{a_1}-\widetilde{a_2}\pmod{m}$. Otherwise, we have $n-r=\widetilde{a_1}-\widetilde{a_2}+\big(p^2+p+1\big)$, from which it clearly follows
$$
n-m=\widetilde{a_1}-\widetilde{a_2}.
$$
So we also deduce $n\equiv \widetilde{a_1}-\widetilde{a_2}\pmod{m}$.
\end{proof}

We now turn to the proof of Theorem \ref{thm2}.
\begin{proof}[Proof of Theorem \ref{thm2}]
Let $\varepsilon>0$ be an arbitrarily small given number. Then by Lemma \ref{lem2}, there is some prime $p$ so that
\begin{align*}
\frac{\sqrt{2m-3}-1}{2}<p<\frac{\sqrt{\frac{4}{2-\varepsilon}m-3}-1}{2}
\end{align*}
providing that $m$ is sufficiently large (in terms of $\varepsilon$), which is equivalent to say
\begin{align}\label{eqthm3-1}
(2-\varepsilon)\big(p^{2}+p+1\big)<m<2\big(p^{2}+p+1\big).
\end{align}
By Lemma \ref{lem3-1}, there is a subset $A$ of $\mathbb{Z}_{p^{2}+p+1}$ so that $\delta_A(n)=1$ for any $n\in \mathbb{Z}_{p^{2}+p+1}$ with $n\neq \overline{0}$. Employing Lemma \ref{lem3-2}, we have
\begin{align*}
|A|^2=\sum\limits_{n\in \mathbb{Z}_{p^{2}+p+1}}\delta_A(n)=\sum\limits_{n\in \mathbb{Z}_{p^{2}+p+1},~n\neq \overline{0}}\delta_A(n)+\delta_A(0)=p^{2}+p+|A|,
\end{align*}
from which it follows clearly that
\begin{align*}
|A|=p+1.
\end{align*}
By Lemma \ref{lem3-3} and (\ref{eqthm3-1}), there is a subset $B$ of $\mathbb{Z}_{m}$ with
\begin{align}\label{eqthm3-2}
|B|\le 2|A|\le 2(p+1)
\end{align}
such that $\delta_B(n)\ge 1$ for any $n\in \mathbb{Z}_{m}$. Thus, by the definition of $g_m$ and Lemma \ref{lem3-2} again we have
$$
g_m=\min\limits_{\widetilde{A}\in \mathcal{K}_m}\left\lbrace m^{-1}\sum_{n\in \mathbb{Z}_m}\delta_{\widetilde{A}}(n)\right\rbrace\le m^{-1}\sum_{n\in \mathbb{Z}_m}\delta_{B}(n)=\frac{|B|^2}{m}.
$$
From (\ref{eqthm3-1}) and (\ref{eqthm3-2}), we get
$$
\frac{|B|^2}{m}\le \frac{4(p+1)^2}{(2-\varepsilon)\big(p^{2}+p+1\big)}\le \frac{4}{2-\varepsilon/2},
$$
provided that $m$ (hence $p$) is sufficiently large (in terms of $\varepsilon$). Hence, we conclude that
$$
\limsup_{m\rightarrow\infty}g_m\le \frac{4}{2-\varepsilon/2}
$$
for any $\varepsilon>0$, which clearly means that
$$
\limsup_{m\rightarrow\infty}g_m\le 2.
$$
This completes the proof of Theorem \ref{thm2}.
\end{proof}

\end{document}